\newtheorem{thm}{Theorem}[section]
\newtheorem{lem}[thm]{Lemma}
\theoremstyle{definition}
\theoremstyle{remark}
\newtheorem{rem}[thm]{Remark}
\begin{document}

\title[Krylov-Safonov without localization]{A proof of the Krylov-Safonov theorem without localization}
\author{Connor Mooney}
\address{Department of Mathematics, UC Irvine}
\email{\tt mooneycr@math.uci.edu}

% ----------------------------------------------------------------
\begin{abstract}
The Krylov-Safonov theorem says that solutions to non-divergence uniformly elliptic equations with rough coefficients are H\"{o}lder continuous. The proof combines a basic measure estimate with delicate localization and covering arguments. Here we give a ``global" proof based on convex analysis that avoids the localization and covering arguments. As an application of the technique we prove a $W^{2,\,\epsilon}$ estimate where $\epsilon$ decays with the ellipticity ratio of the coefficients at a rate that improves previous results, and is optimal in two dimensions.
\end{abstract}
\maketitle
% ----------------------------------------------------------------

%%%%%%%%%%%%%%%%%%%%%%%%%%%%%%%%%%%%%%%%%%%%%%%%%%%%%%%%%%%%%%%%%%%%%%%%%%%%%%%%%%%%%%%%%%%%%%%%%%%%%%%%%%%%
\section{Introduction}
In this paper we consider viscosity solutions of
\begin{equation}\label{Main}
M_{\Lambda}^-(D^2u) \leq 0 \leq M_{\Lambda}^+(D^2u)
\end{equation}
in $B_1 \subset \mathbb{R}^n$, where $\Lambda \geq 1$ and $M_{\Lambda}^{\pm}$ are the Pucci extremal operators (see Section \ref{Statements} for the definition). The problem (\ref{Main}) includes all $C^2$ solutions to
uniformly elliptic equations of the form $\text{tr}(A(x)D^2u) = 0$ with $I \leq A(x) \leq \Lambda I$. 

Fundamental results for (\ref{Main}) include the interior $C^{\alpha}$ estimate of Krylov-Safonov \cite{KS}, and Lin's interior $W^{2,\,\epsilon}$ estimate \cite{Lin}. Here
$\alpha$ and $\epsilon$ depend only on $n$ and $\Lambda$. These results have important
consequences for the regularity of solutions to fully nonlinear uniformly elliptic equations of the form $F(D^2w) = 0$, including: interior $C^{1,\,\alpha}$ estimates for general $F$ (see \cite{CC}); 
the Evans-Krylov interior $C^{2,\,\alpha}$ estimates for concave $F$ (\cite{E}, \cite{K}); and partial regularity for general $F$ (\cite{ASS}).

To our knowledge, all the proofs of the $C^{\alpha}$ and $W^{2,\,\epsilon}$ estimates for (\ref{Main}) combine a basic (ABP-type) measure estimate with a localization (barrier) argument and a delicate covering (Calderon-Zygmund
or Vitali) argument. The purpose of this paper is to give ``global" proofs of these results that completely avoid the localization and covering arguments, and instead rely on elementary convex analysis.

Apart from its own interest, our method is useful for understanding the dependence of $\epsilon$ on $\Lambda$ in $W^{2,\,\epsilon}$ estimates for supersolutions of (\ref{Main}).
This question received attention recently due to its connection to partial regularity for fully nonlinear equations.
In \cite{ASS}, the authors conjecture that $\epsilon$ depends linearly on $\Lambda^{-1}$ (in any dimension $n$), and they construct an example showing this is the best we can hope for.
In \cite{Le}, Le proves that we can take $\epsilon \geq c(n)\Lambda^{-1-n}$, improving the exponential dependence previously known. With our method we improve further to
$$\epsilon \geq c(n)\Lambda^{1-n}.$$ In view of known examples, this dependence is optimal when $n = 2$. 

\vspace{3mm}

We now describe our approach. The idea is to show decay in measure of the sets where $u$ lies above its lower envelope of paraboloids with opening $-2^{k}$ (see Section \ref{Preliminaries} for the definition) by taking the paraboloids of opening $-2^{k+1}$ which are tangent to the envelope away from the agreement set, and sliding them up until they touch $u$. Our key lemma (Lemma \ref{MeasureEstimate}) is a basic measure estimate which says that provided the new contact points are interior points, they fill a universal fraction of the set where $u$ lies above its envelope. The main issue is thus to make sure that the new contact points are interior points.

Using this approach we prove three results for super-solutions of (\ref{Main}): an interior $W^{2,\epsilon}$ estimate, a global $W^{2,\epsilon}$ estimate, and the weak Harnack inequality (which by standard arguments implies the Krylov-Safonov interior H\"{o}lder estimate for solutions of (\ref{Main})).
For the interior $W^{2,\,\epsilon}$ estimate we can reduce to the case that $u$ agrees with a paraboloid near the boundary, which guarantees interior contact points.
For the global $W^{2,\epsilon}$ estimate, we must instead lift the paraboloids which are tangent to the envelope at least distance $\sim 2^{-k/2}$ from the boundary to get interior contact points.
We then use a dichotomy argument (either we can proceed as in the interior estimate, or the set where $u$ lies above its envelope concentrates in a set of small measure near the boundary). Our proof of the weak Harnack inequality follows the same lines as that of the global $W^{2,\,\epsilon}$ estimate. 

\vspace{3mm}

The paper is organized as follows. In Section \ref{Statements} we give precise statements of our results.
In Section \ref{Preliminaries} we recall some notions from convex analysis, and prove our version of the basic measure estimate. In Section \ref{InteriorW2E} we prove the interior $W^{2,\,\epsilon}$ estimate,
with $\epsilon \sim \Lambda^{1-n}$. In Section \ref{GlobalW2E} we prove the global $W^{2,\,\epsilon}$ estimate.
Finally, in Section \ref{WeakHarnack} we prove the weak Harnack inequality.

%%%%%%%%%%%%%%%%%%%%%%%%%%%%%%%%%%%%%%%%%%%%%%%%%%%%%%%%%%%%%%%%%%%%%%%%%%%%%%%%%%%%%%%%%%%%%%%%%%%%%%%%%%%%
\section{Statements of Results}\label{Statements}

In this section we give precise statements of our results. To that end we make some definitions. For $N \in \text{Sym}_{n \times n}$ we define the Pucci extremal operators
$$M_{\Lambda}^-(N) := \left(\sum \text{positive eigenvalues of N}\right) + \Lambda\,\left(\sum \text{negative eigenvalues of N}\right),$$
and 
$$M_{\Lambda}^+(N) := -M_{\Lambda}^-(-N).$$
It is straightforward to check that $M_{\Lambda}^-(N) \leq \text{tr}(AN) \leq M_{\Lambda}^+(N)$ for all $I \leq A \leq \Lambda I$, and each
inequality is achieved for some choice of admissible $A$ depending on $N$. For a bounded domain $\Omega \subset \mathbb{R}^n$ and
functions $u,\, \varphi \in C(\overline{\Omega})$ we say that $\varphi$ is tangent from below to $u$ in $\Omega$ at $x_0 \in \Omega$ if $\varphi \leq u$ in $\overline{\Omega}$
and $\varphi(x_0) = u(x_0)$. Finally, we say that $M_{\Lambda}^-(D^2u) \leq 0$ in the viscosity sense in $\Omega$ if $M_{\Lambda}^-(D^2\varphi(x_0)) \leq 0$ whenever
$\varphi \in C^2(\overline{\Omega})$ and $\varphi$ is tangent from below to $u$ in $\Omega$ at $x_0 \in \Omega$.

\vspace{3mm}

Now we recall an important second-order quantity.
We say that a paraboloid $P$ has opening $a$ if $D^2P = aI$. For a bounded domain $\Omega \subset \mathbb{R}^n$ and a function
$v \in C(\overline{\Omega})$, we define for $x \in \Omega$ the function $\underline{\Theta}_v(x)$ to be smallest $a \geq 0$ such that
a paraboloid of opening $-a$ is tangent from below to $v$ in $\Omega$ at $x$. If there is no tangent paraboloid from below at $x$
(e.g. at the origin for $v = -|x|$ in $\Omega = B_1$), then we say $\underline{\Theta}_v(x) = \infty$.

\vspace{3mm}

We now state our results. They are all estimates for super-solutions of (\ref{Main}). Our first result is an interior $L^{\epsilon}$ estimate for $\underline{\Theta}$, 
with quantitative dependence of $\epsilon$ on $\Lambda$:
\begin{thm}\label{LocalW2E}
Assume that $u \in C(\overline{B_1})$, with $M^{-}_{\Lambda}(D^2u) \leq 0$ in $B_1$. Then
$$|\{\underline{\Theta}_u > 64\|u\|_{L^{\infty}(B_1)}\,t\} \cap B_{1/2}| \leq |B_2|\, t^{-\epsilon}$$
for all $t \geq 2$, with
$$\epsilon = 2^{-2}\,(2n\Lambda)^{1-n}.$$
\end{thm}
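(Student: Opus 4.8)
The plan is to control the measure of the super-level sets of $\underline{\Theta}_u$ by a geometric iteration on the "contact sets" of $u$ with its lower envelope of paraboloids of a given opening. Write $\Omega = B_1$ and, for $a > 0$, let $\Gamma_a$ denote the convex envelope (from below) of $u$ among paraboloids of opening $-a$ restricted to $\Omega$; equivalently $\Gamma_a(x) + \frac{a}{2}|x|^2$ is the convex envelope of $u(x) + \frac{a}{2}|x|^2$. The agreement set $A_a = \{u = \Gamma_a\}$ is (up to the set where tangent paraboloids fail to be interior) precisely where $\underline{\Theta}_u \le a$. So I want to show that $|B_{1/2} \setminus A_{2^k}|$ decays geometrically in $k$ once $2^k$ exceeds a universal multiple of $\|u\|_{L^\infty}$. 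The decay rate will be controlled by the fraction $\mu = \mu(n,\Lambda)$ supplied by Lemma \ref{MeasureEstimate}: each time we double the opening, the "bad" set shrinks by a factor $(1-\mu)$, so $|B_{1/2}\setminus A_{2^k}| \lesssim (1-\mu)^{k - k_0}$ and matching this to $t^{-\epsilon}$ with $t \sim 2^{k-k_0}$ forces $\epsilon \sim -\log_2(1-\mu) \sim \mu$, which with the expected $\mu \sim (n\Lambda)^{1-n}$ gives the stated exponent after bookkeeping the constants $64$ and $2$.

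**The doubling step.**

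Here is the heart of the argument. Suppose $x_0 \in B_{1/2}$ with $\underline\Theta_u(x_0) > 2^{k+1}$, i.e. $x_0$ is not in the contact set at opening $2^{k+1}$. The idea from the introduction is: take a paraboloid $P$ of opening $-2^{k+1}$, and slide it upward until it first touches $u$ from below somewhere in $\overline\Omega$. If the touching point is interior, then at that point $\underline\Theta_u \le 2^{k+1}$, so the touching point lies in $A_{2^{k+1}}$; moreover, since we can choose $P$ so that its gradient at the touching point ranges over a controlled set (the gradients of supporting paraboloids of opening $-2^{k+1}$ to $\Gamma_{2^k}$ over the region where $\Gamma_{2^k} < u$, where $u$ does not already have a paraboloid of opening $-2^k$ below it), the normal image / area-formula bound of Lemma \ref{MeasureEstimate} says that the measure of the new contact points in $A_{2^{k+1}} \setminus A_{2^k}$ is at least a universal fraction $\mu$ of $|B_{1/2}\setminus A_{2^k}|$. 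Equivalently $|B_{1/2}\setminus A_{2^{k+1}}| \le (1-\mu)|B_{1/2}\setminus A_{2^k}|$. The passage from the second-derivative information $M^-_\Lambda(D^2 u)\le 0$ to the volume factor $\mu$ is exactly where the supersolution hypothesis enters: on the convex contact set $D^2\Gamma_{2^k} \ge -2^k I$, and where the Pucci inequality holds one gets the largest eigenvalue of $D^2\Gamma$ bounded by $(n-1)\Lambda \cdot 2^k$ (that is where the $\Lambda^{1-n}$, i.e. one factor of $2^k$ in each of $n-1$ directions times $\Lambda$, enters the Jacobian comparison), which controls how much room the gradients sweep out.

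**The main obstacle: interior contact points.**

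The genuine difficulty — flagged in the introduction as "the main issue" — is guaranteeing that when we slide $P$ up it touches $u$ at an \emph{interior} point of $B_1$ rather than on $\partial B_1$. For the interior estimate this is handled by first \emph{modifying} $u$ near the boundary: one replaces $u$ on $B_1 \setminus B_{3/4}$ (say) by a paraboloid of very negative opening $\sim -\|u\|_{L^\infty}$ that lies below $u$, matching continuously; this modified function is still a supersolution in the interior, it does not change $\underline\Theta_u$ on $B_{1/2}$, and the steep boundary paraboloid forces every slid-up paraboloid of the much smaller opening $-2^{k+1}$ (for $2^k$ large compared to $\|u\|_{L^\infty}$, which is where the threshold $64\|u\|_{L^\infty}$ comes from) to make contact in the interior. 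I would carry this modification out first, then run the doubling iteration on the modified function with $k$ ranging over $k \ge k_0$ where $2^{k_0} \sim \|u\|_{L^\infty}$, and finally translate the geometric decay $|B_{1/2}\setminus A_{2^k}| \le |\Omega|(1-\mu)^{k-k_0}$ into the power bound $|\{\underline\Theta_u > 64\|u\|_{L^\infty} t\}\cap B_{1/2}| \le |B_2| t^{-\epsilon}$ for $t \ge 2$ by choosing $\epsilon$ so that $(1-\mu) \le 2^{-\epsilon}$ with the explicit constant $\mu = \mu(n,\Lambda)$ delivered by Lemma \ref{MeasureEstimate}, checking that $\mu \ge 2 \cdot 2^{-2}(2n\Lambda)^{1-n}$ so that $\epsilon = 2^{-2}(2n\Lambda)^{1-n}$ works. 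The constants $64$ and $|B_2|$ (rather than $|B_{1/2}|$) are slack absorbed in this last step and in the rescaling from the modification annulus.
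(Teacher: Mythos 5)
There is a genuine gap, and it sits exactly at the step the paper itself flags as the crux: guaranteeing that the slid-up paraboloids make \emph{interior} contact. Your device for this --- replacing $u$ on $B_1\setminus B_{3/4}$ by a ``paraboloid of very negative opening $\sim -\|u\|_{L^\infty}$ that lies below $u$, matching continuously'' --- is both internally inconsistent and pointing in the wrong direction. First, with your scales the sliding paraboloids have opening $-2^{k+1}$ with $2^k\gtrsim 64\|u\|_{L^\infty}$, so they are \emph{steeper} than your boundary paraboloid, contradicting your phrase ``much smaller opening.'' Second, if one really took the boundary paraboloid steeper than the sliding ones, the mechanism backfires: when you slide $P$ up, the first contact point is the argmin of $\tilde u-P$ over $\overline{B_1}$, and near $\partial B_1$ the difference $\tilde u-P$ is then concave and decreasing toward the boundary, so the minimum is attracted \emph{to} $\partial B_1$ --- precisely the failure mode Lemma \ref{MeasureEstimate} cannot tolerate. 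Third, even in the ``shallow relative to $-2^{k+1}$'' reading, you keep the domain equal to $B_1$ and give no argument that the non-contact set $B_1\setminus A_{2^k}(\tilde u)$ stays away from $\partial B_1$: a point near the boundary where $\tilde u$ equals the barrier paraboloid is in $A_{2^k}(\tilde u)$ only if a paraboloid of opening $-2^k$ lies below $\tilde u$ \emph{globally}, and since $\tilde u$ is a minimum this is not implied by lying below the barrier; with your constants this can fail for the early values of $k$, and without confinement of the tangent points there is no room estimate forcing the slid contact to be interior. (Also, a single paraboloid cannot ``match $u$ continuously'' along $\partial B_{3/4}$; to keep a supersolution one needs the $\min$ of two supersolutions, not an arbitrary continuous gluing.)

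The paper resolves this differently and this is the idea your sketch is missing: after normalizing so that $0<u\le 2^{-4}$ (this is where the factor $64=2^6$ enters), it sets $\tilde u=\min\{u,\tfrac14(1-|x|^2)\}$ in $B_1$ and \emph{extends} $\tilde u$ by the \emph{shallow} paraboloid $\tfrac14(1-|x|^2)$ (opening $-\tfrac12$, flatter than every opening $-2^{k+1}\le -2$ used in the iteration) to the much larger ball $B_R$, $R=32$. Because $u$ is small and positive, $\tilde u$ coincides with this paraboloid near and beyond $\partial B_1$, so $B_R\setminus A_1(\tilde u)\subset\subset B_2$: the bad sets $F_k$ are confined to $B_2$ from the very first step, the slide heights are bounded by the oscillation, and the steeper slid paraboloids with tangent points in $B_2$ drop far below $\tilde u$ long before $\partial B_R$, so $E_k\subset\subset B_R$ and Lemma \ref{MeasureEstimate} applies at every stage. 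Your framework (contact sets $A_{2^k}$, the doubling iteration via Lemma \ref{MeasureEstimate}, geometric decay converted to $t^{-\epsilon}$ with $\epsilon=2^{-2}(2n\Lambda)^{1-n}$) matches the paper, but as written the proof of the one nontrivial claim --- interior contact --- is absent, and the heuristic offered for it would not survive being made precise.
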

\noindent It is standard that as a consequence of Theorem (\ref{LocalW2E}), we obtain the interior $W^{2,\,\delta}$ estimate
$$\|D^2u\|_{L^{\delta}(B_{1/2})} \leq C(n,\,\Lambda)\|u\|_{L^{\infty}(B_1)}$$
for solutions of (\ref{Main}), with $\delta = \epsilon/2 = 2^{-3}(2n\Lambda)^{1-n}$.

Estimates of this type were previously proven in \cite{ASS} (for some small $\epsilon(n,\,\Lambda)$) and more recently in \cite{Le} (with $\epsilon \sim \Lambda^{-1-n}$).
Theorem \ref{LocalW2E} improves the dependence further to $\epsilon \sim \Lambda^{1-n}$, which is optimal when $n = 2$ in view of the examples in \cite{ASS}. 
See Remark \ref{Gain} for an explanation of exactly where the two extra factors of $\Lambda$ are gained in our argument.

\begin{rem}
We conjecture that the dependence $\epsilon \sim \Lambda^{1-n}$ is optimal in any dimension. For $x \in \mathbb{R}^n$ write $x = (x',\,x_n)$. 
Preliminary constructions using building blocks of the form $min\{\Lambda |x'|^2 - x_n^2 - 1,\, 0\}$, along with its
rescalings and rotations, suggest the existence of examples where $\epsilon \sim \Lambda^{\frac{1-n}{2}}$. This would show at least that the decay rate depends on dimension. We intend to investigate this in future work.
\end{rem}

\begin{rem}
In \cite{ASS} and \cite{Le} the key estimate is stated slightly differently, namely: 
\begin{equation}\label{Correction}
|\{\underline{\Theta}_u > t\} \cap B_{1/2}| < C(n,\,\Lambda)t^{-\epsilon}
\end{equation}
for all $t > t_0(n,\,\Lambda)\|u\|_{L^{\infty}(B_1)}$. However, the authors actually proved (and we believe meant to state) an estimate of the form we write
in Theorem \ref{LocalW2E}. Indeed, consider $u = s(1-|x|)$ and $t = 2t_0s$. By the one-homogeneity of $\underline{\Theta}_u$ in $u$,
as $s \rightarrow \infty$ the left side of (\ref{Correction}) is constant and positive while the right side goes to $0$. 
\end{rem}

Our second theorem is a global $L^{\epsilon}$ estimate for $\underline{\Theta}$:
\begin{thm}\label{GlobW2E}
Under the same hypotheses as Theorem \ref{LocalW2E} we have
$$|\{\underline{\Theta}_u > 2^{10n}\|u\|_{L^{\infty}(B_1)}\,t\}| \leq |B_1|\, t^{-\epsilon}$$
for all $t \geq 2$, with $\epsilon = 2^{-3}\,(2n\Lambda)^{1-n}.$
\end{thm}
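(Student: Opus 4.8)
The plan is to run the same sliding-paraboloids scheme that underlies Theorem \ref{LocalW2E}, but now keeping track of what happens near $\partial B_1$, where we cannot guarantee interior contact points. Write $s = \|u\|_{L^\infty(B_1)}$ and, as in the interior argument, let $A_k$ denote the (measure of the) set in $B_1$ where $u$ lies strictly above its lower envelope of paraboloids of opening $-c_0 s\, 2^k$ for an appropriate normalizing constant $c_0$. The goal is a decay estimate $A_{k+1} \leq \mu A_k$ with a universal $\mu < 1$ (depending on $n,\Lambda$ through the measure estimate Lemma \ref{MeasureEstimate}), from which the stated bound follows by iteration after choosing $\epsilon$ so that $\mu = 2^{-\epsilon}$; the precise constants $2^{10n}$ and $\epsilon = 2^{-3}(2n\Lambda)^{1-n}$ are then bookkeeping. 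The subtlety, flagged in the introduction, is that the paraboloids of opening $-c_0 s\,2^{k+1}$ which are tangent to the envelope at points within distance $\sim 2^{-k/2}$ of $\partial B_1$ may, when slid upward, first touch $u$ at a \emph{boundary} point, where Lemma \ref{MeasureEstimate} does not apply.

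First I would set up the dichotomy. Fix $k$ and consider the contact set of the envelope of opening $-c_0 s\, 2^{k+1}$ with $u$. Split the ``bad'' part of $A_k$ (where $u$ lies above the lower envelope of opening $-c_0 s\,2^k$) into the part $G_k$ at distance $\geq \rho_k := C 2^{-k/2}$ from $\partial B_1$ and the part $B_k$ within distance $\rho_k$ of $\partial B_1$. On $G_k$: a paraboloid of opening $-c_0 s\,2^{k+1}$ tangent to the envelope at such an interior-ish point, slid up by at most $O(s\,2^{k+1}\rho_k^2) = O(s)$ in value, cannot reach the boundary (where it would have to exceed $u$, but the slide amount is controlled by the distance-squared to the boundary against the opening), so its new contact point with $u$ is interior; here Lemma \ref{MeasureEstimate} gives that these contact points fill a fixed fraction of $G_k$, exactly as in the interior proof, and one gets decay on $G_k$. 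On $B_k$: we simply estimate $|B_k| \leq |\{x : \mathrm{dist}(x,\partial B_1) < \rho_k\}| \leq C n\, \rho_k = C n\, 2^{-k/2}$, which already decays in $k$, so no PDE input is needed there.

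Second I would combine the two pieces: $A_{k+1} \leq (\text{contribution from }G_k) + |B_k| \leq \mu_0\, |G_k| + Cn\,2^{-k/2} \leq \mu_0 A_k + Cn\, 2^{-k/2}$ with $\mu_0 < 1$ universal. A standard iteration lemma for sequences satisfying $A_{k+1} \leq \mu_0 A_k + \beta^k$ (with $\mu_0, \beta < 1$) then yields $A_k \leq C\,\gamma^k$ with $\gamma = \max\{\mu_0, \beta\}^{1 - o(1)}$, i.e. geometric decay with a rate that is the worse of the PDE rate $\mu_0$ (which is where the $\Lambda^{1-n}$ enters, via Lemma \ref{MeasureEstimate}) and the purely geometric rate $2^{-1/2}$ coming from the boundary layer. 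Since $2^{-1/2}$ is a fixed universal number while $\mu_0 \to 1$ as $\Lambda \to \infty$, the PDE rate dominates for large $\Lambda$, so $\epsilon$ still scales like $(2n\Lambda)^{1-n}$; the extra factor of $2^{-1}$ relative to Theorem \ref{LocalW2E} (i.e. $2^{-3}$ versus $2^{-2}$) and the larger prefactor $2^{10n}$ absorb the loss from the iteration lemma and from the normalization on $B_1$ rather than $B_{1/2}$. Translating ``$A_k$ small for all large $k$'' into the super-level set statement for $\underline\Theta_u$ is then the same routine step as in Theorem \ref{LocalW2E}: $\{\underline\Theta_u > c_0 s\, 2^k\} \subseteq \{u > \text{envelope of opening } -c_0 s\, 2^{k}\}$ up to a null set, since a point with $\underline\Theta_u(x)$ large cannot have the opening-$(-c_0 s 2^k)$ envelope touching $u$ at $x$.

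The main obstacle is the interior-contact claim on $G_k$: one must choose the cutoff distance $\rho_k \sim 2^{-k/2}$ sharply enough that (a) the slide distance of a paraboloid of opening $-c_0 s\, 2^{k+1}$ tangent to the envelope at a point of $G_k$ is genuinely $\leq$ a fixed multiple of $s$, forcing the new touching point to stay in $B_1$, and simultaneously (b) the boundary layer $|B_k| \sim \rho_k$ decays fast enough not to spoil the geometric rate — and these two requirements pull $\rho_k$ in opposite directions, so the exponent $1/2$ is exactly the balance point (slide $\sim 2^{k}\rho_k^2$ bounded $\iff \rho_k \lesssim 2^{-k/2}$). I would also need to double-check that the lower envelope on $B_1$ is well-defined and that its contact set avoids $\partial B_1$ to begin with — here one uses that $u$ is bounded, so for $k$ large the envelope of opening $-c_0 s\, 2^k$ (which dips to $\approx -c_0 s\, 2^k$ deep) lies strictly below $u$ near the boundary, hence contact points of that envelope are automatically interior; this is what makes the scheme get started without the artificial ``$u$ equals a paraboloid near $\partial B_1$'' reduction used for the interior theorem.
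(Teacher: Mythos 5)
Your proposal is correct and follows essentially the same route as the paper: after normalizing so that $\|u\|_{L^\infty}$ is $\lesssim 2^{-8n}$, one introduces a boundary layer of width $\rho_k \sim 2^{-k/2}$ chosen so that paraboloids of opening $-2^{k+1}$ tangent to $\Gamma^{2^k}_u$ at points outside the layer slide up to \emph{interior} contact points (vertex and new contact point stay within $\sim\rho_k$ of the tangency point because $2^k\rho_k^2 \geq \|u\|_{L^\infty}$), Lemma \ref{MeasureEstimate} is applied there, and one uses that the layer's volume decays like $2^{-k/2}$, which beats the PDE decay factor $1-2^{-2}(2n\Lambda)^{1-n}$. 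The only cosmetic differences are that the paper runs a per-step dichotomy (if more than half of $B_1\setminus A_{2^k}(u)$ lies in the layer, its measure is already below the inductive target) instead of your additive recursion $A_{k+1}\leq \mu_0 A_k + Cn\,2^{-k/2}$ plus an iteration lemma, and your closing remark that the envelope lies strictly below $u$ near $\partial B_1$ is not accurate (one has $\Gamma^a_u = u$ on $\partial B_1$ by strict convexity of the domain), though nothing in your argument actually relies on it.
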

\noindent Again, a global $W^{2,\,\delta}$ estimate for solutions of (\ref{Main}) follows, with $\delta = \epsilon/2.$ Global $W^{2,\,\delta}$ estimates 
for (\ref{Main}) for some $\delta(n,\,\Lambda)$ were proven in \cite{LL}.
Our main reason for including Theorem \ref{GlobW2E} (apart from the improved dependence
of $\delta$ on $\Lambda$ compared to previous results) is to emphasize the method of proof, which avoids using localizing barriers
and covering arguments, and involves a dichotomy argument that we also use in our proof of the weak Harnack inequality.

\vspace{3mm}

Our last result is an $L^{\epsilon}$ estimate for positive super-solutions known as the ``weak Harnack inequality:"
\begin{thm}\label{LEps}
Assume that $M_{\Lambda}^-(D^2u) \leq 0$ in $B_4 \subset \mathbb{R}^n$, and that $u \geq 0$. Then
$$|\{u \geq 2^{64n^2\Lambda}u(0) t\} \cap B_{1/2}| \leq |B_{1/2}|t^{-\epsilon}$$
for $t \geq 2$, with $\epsilon = 2^{-3}(2n\Lambda)^{1-n}$.
\end{thm}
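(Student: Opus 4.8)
The plan is to imitate the envelope-and-dichotomy scheme from the proof of the global $W^{2,\epsilon}$ estimate (Theorem \ref{GlobW2E}), with the single value $u(0)$ replacing the role that $\|u\|_{L^\infty}$ played there, and to add one new ingredient: a weak-$L^1$ bound for the lower paraboloid envelope of $u$, which lets us convert the resulting decay estimate for $\underline{\Theta}_u$ into the claimed estimate for the super-level sets of $u$. Since the inequality is one-homogeneous in $u$, we may assume $u(0)=1$; if $u(0)=0$ the strong minimum principle for supersolutions of $M_\Lambda^-$ forces $u\equiv 0$ and the estimate is trivial. Replacing $u$ by $\min\{u,L\}$ — still a nonnegative supersolution equal to $1$ at the origin — and letting $L\to\infty$ afterward, we may also assume $u$ bounded, with all constants independent of $\sup u$. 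Work on $\Omega=B_3$. For $a>0$ let $\Gamma_a$ be the convex envelope on $\overline\Omega$ of $u+\tfrac a2|x|^2$; it is finite since $u\ge0$, it lies between $\min_{\overline\Omega}(u+\tfrac a2|x|^2)\ge0$ and $u+\tfrac a2|x|^2$, and in particular $\Gamma_a(0)\le u(0)=1$. Let $w_a=\Gamma_a-\tfrac a2|x|^2$ be the lower envelope of $u$ by paraboloids of opening $-a$, and $C_a=\{w_a=u\}$ its contact set — closed, nondecreasing in $a$, and equal in $\Omega$ to $\{\underline{\Theta}_u\le a\}$.

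Two properties of $w_a$ carry the argument. First, on $C_a$ one has $0\le D^2\Gamma_a\le D^2u+aI$, so $D^2u\ge -aI$ there, and together with $M_\Lambda^-(D^2u)\le0$ this forces every eigenvalue of $D^2\Gamma_a$ into $[0,2n\Lambda a]$; hence $\det D^2\Gamma_a\le(2n\Lambda a)^n$ on $C_a$, and by the area formula $|\partial\Gamma_a(C_a\cap\Omega')|\le(2n\Lambda a)^n|\Omega'|$ for every $\Omega'\subseteq\Omega$. This is the factor responsible for the exponent $\epsilon=2^{-3}(2n\Lambda)^{1-n}$. Second, $w_a$ is itself a nonnegative supersolution of $M_\Lambda^-$ with $w_a(0)\le1$: on the faces of $\Gamma_a$ it equals an affine function minus $\tfrac a2|x|^2$, where $M_\Lambda^-(D^2w_a)=-\Lambda n a<0$, on $C_a$ it equals $u$, and these glue in the viscosity sense. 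Being semiconcave, $w_a+\tfrac a2|x|^2=\Gamma_a$ is convex, and the supersolution property combined with $D^2\Gamma_a\ge0$ gives the one-sided bound $\Delta\Gamma_a\le C(n)\Lambda a$; thus $\Gamma_a-\tfrac{C\Lambda a}{2}|x|^2$ is superharmonic, and $\Gamma_a(0)\le1$ yields that the average of $\Gamma_a$ over each sphere $\partial B_r$ ($r\le3$) is at most $1+\tfrac{C\Lambda a}{2}r^2$. By Markov's inequality, integrating over $r$, and $\Gamma_a\ge w_a$, we obtain the weak-$L^1$ bound
\[
|\{w_a>T\}\cap B_{1/2}|\ \le\ |\{\Gamma_a>T\}\cap B_{1/2}|\ \le\ C(n)\,\frac{1+\Lambda a}{T}\,|B_{1/2}|\qquad(T>0).
\]

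Now iterate as in Theorem \ref{GlobW2E}. Off $C_a$, $w_a$ is affine-minus-$\tfrac a2|x|^2$ on each face, so at every $x_0\in\Omega\setminus C_a$ there is a paraboloid of opening $-2a$ tangent from below to $w_a$ — hence to $u$ — in all of $\Omega$; slide it up until it first touches $u$, at a point $x'$. The slide amount is controlled using $w_a(0)\le1$, semiconcavity, and the weak-$L^1$ bound above applied at the previous scale (to discard the small set of $x_0$ where $u$ is already large), so that for $x_0$ farther from $\partial\Omega$ than a universal margin the new contact point $x'$ lies in $\Omega$ and Lemma \ref{MeasureEstimate} applies. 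The usual dichotomy — either these new contact points, which lie in $C_{2a}\subseteq\{\underline{\Theta}_u\le 2a\}$, fill a fixed fraction $\mu\gtrsim(2n\Lambda)^{1-n}$ of the part of $(\Omega\setminus C_a)\cap B_2$ away from $\partial\Omega$, or $\Omega\setminus C_a$ concentrates in a thin neighborhood of $\partial\Omega$ and already has small measure inside $B_2$ — then yields, starting from a base level $a_0=2^{C n^2\Lambda}$,
\[
|\{\underline{\Theta}_u>a\}\cap B_{1/2}|\ \le\ |B_{1/2}|\,(a/a_0)^{-\epsilon'},\qquad a\ge a_0,\quad\epsilon'=2^{-2}(2n\Lambda)^{1-n}.
\]
Finally, for $t\ge2$ put $s=2^{64n^2\Lambda}t$ and choose the intermediate opening $a\asymp(a_0\,s/\Lambda)^{1/2}$; splitting $\{u>s\}\cap B_{1/2}$ into its part outside $C_a$ (contained in $\{\underline{\Theta}_u>a\}\cap B_{1/2}$) and its part inside $C_a$ (where $u=w_a$, handled by the displayed weak-$L^1$ bound), we get $|\{u>s\}\cap B_{1/2}|\lesssim|B_{1/2}|\bigl((a/a_0)^{-\epsilon'}+\Lambda a/s\bigr)\lesssim|B_{1/2}|\,t^{-\epsilon'/2}$, and $\epsilon'/2=2^{-3}(2n\Lambda)^{1-n}$ is exactly the stated $\epsilon$ (the extra power of $t$ being absorbed into $2^{64n^2\Lambda}$).

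The genuinely delicate steps — the ones that must do the work of the localizing barriers and covering lemmas that this approach avoids — are two. One is the base-level estimate $|\{\underline{\Theta}_u>a_0\}\cap B_{1/2}|\le(1-\mu)|B_{1/2}|$ for $a_0\sim2^{Cn^2\Lambda}$, to be proved purely from $u(0)=1$ by an envelope argument (a paraboloid of opening $-a_0$ slid up to touch $u$ can touch only near the origin, and in the interior, because $u\ge0$ and $u(0)=1$). The other is guaranteeing that the slid paraboloids in the iteration have interior contact points with no boundary information on $u$: this rests on calibrating the sliding margin against $\operatorname{dist}(B_{1/2},\partial B_3)$, together with the weak-$L^1$ bound to control the locus where $u$ is already large. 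Everything else is bookkeeping.
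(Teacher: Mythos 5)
There are genuine gaps, and they sit exactly at the points your sketch defers or asserts. The central new tool you introduce, the weak-$L^1$ bound $|\{w_a>T\}\cap B_{1/2}|\le C(n)\frac{1+\Lambda a}{T}|B_{1/2}|$, is derived from the claim that $\Delta\Gamma_a\le C(n)\Lambda a$ (equivalently $0\le D^2\Gamma_a\le C\Lambda a\,I$) holds \emph{globally} in $B_3$. That claim is not justified by your argument: off the contact set the convex envelope is not ``an affine function'' on open faces, it is only ruled along segments, and those segments can terminate on $\partial B_3$, where you know nothing about $u$ beyond continuity and nonnegativity. The supersolution property of $u$ controls $D^2\Gamma_a$ only \emph{on} the contact set (and even there only a.e., after an inf-convolution step as in Lemma \ref{MeasureEstimate}); it does not exclude ridges or large transversal curvature of $\Gamma_a$ away from $C_a$, so the superharmonicity of $\Gamma_a-\frac{C\Lambda a}{2}|x|^2$ and the spherical-average bound do not follow. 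Note that convexity plus $\Gamma_a\ge 0$, $\Gamma_a(0)\le 1$ gives no weak-$L^1$ control by itself (consider $K(x_1-0.1)^+$), so this step really needs an ingredient you have not supplied. Moreover, radial barrier examples ($\min\{M,|x-z|^{-\gamma}\}$ with $\gamma\sim n\Lambda$, normalized so $u(0)=1$) show that any correct bound of this type must carry constants exponential in $\Lambda$ coming from a growth estimate, not the polynomial factor $1+\Lambda a$ your balancing argument uses.

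The second gap is that the two steps you yourself label ``genuinely delicate'' --- the base-level estimate at opening $a_0\sim 2^{Cn^2\Lambda}$ and, above all, the guarantee that the slid paraboloids have \emph{interior} contact points --- are precisely the content of the paper's proof, and a measure bound cannot do that job: to control where a vertex sits and how far a lifted paraboloid travels you need pointwise information (smallness of $\inf_{B_\rho}u$ in every small ball near the relevant points), since a tangent paraboloid escaping to the boundary only forces $u$ to be large on a tiny ball, which no weak-$L^1$ estimate forbids. The paper supplies exactly this missing ingredient as Lemma \ref{Growth}, the barrier estimate $\inf_{B_\rho(x_0)}u\le 2\rho^{-n\Lambda}u(0)$, and uses it twice: first to prove the inclusion $A_{2^k}(u)\cap B_{1/2}\subset\{u\le 2^k\}$ (a tangent paraboloid of opening $-2^k$ at a point where $u>2^k$ would force $u\ge 2^{k-1}$ on a ball of radius $1/4$ centered in $B_1$, contradicting the growth bound after the normalization $u(0)=2^{-64n^2\Lambda}$), which converts envelope decay into super-level-set decay directly and makes your intermediate-opening splitting and weak-$L^1$ lemma unnecessary; and second, through the iterated form $\inf_{B_{\rho_k}(x)}(2^{-k}u)\le\rho_k^2$ with $\rho_k=2^{-30n}2^{-(k+1)/(2n\Lambda)}$, to show that vertices and new contact points stay in $B_{1/2-\rho_k}$, so that (a localized version of) Lemma \ref{MeasureEstimate} applies and the same dichotomy as in Theorem \ref{GlobW2E} closes the induction. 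Without an analogue of Lemma \ref{Growth} your scheme cannot get started, and with it the detour through $w_a$ and the weak-$L^1$ bound is not needed.
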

\noindent It is standard to obtain from Theorem \ref{LEps} the Krylov-Safonov interior $C^{\alpha}$ estimate
$$\|u\|_{C^{\alpha}(B_{1/2})} \leq C(n,\,\Lambda)\|u\|_{L^{\infty}(B_1)}$$
for solutions of (\ref{Main}) in $B_1$ (see e.g. \cite{CC}), and Theorem \ref{LEps} gives $\alpha \sim c(n)^{\Lambda^{n-1}}$.

%%%%%%%%%%%%%%%%%%%%%%%%%%%%%%%%%%%%%%%%%%%%%%%%%%%%%%%%%%%%%%%%%%%%%%%%%%%%%%%%%%%%%%%%%%%%%%%%%%%%%%%%%%%%%
\section{Preliminaries}\label{Preliminaries}

In this section we recall some notions from from convex analysis, and we prove a basic measure estimate.

\vspace{3mm}

For $y \in \mathbb{R}^n$ and $a,\,b \in \mathbb{R}$ we define the paraboloids $P^a_{y,\,b}$ by
$$P^a_{y,\,b}(x) := \frac{a}{2}|x|^2 + y \cdot x + b.$$ 
We say the paraboloids $P^a_{y,\,b}$ have opening $a$. For a bounded, strictly convex domain $\Omega \subset \mathbb{R}^n$ and a function
$v \in C(\overline{\Omega})$, we define the $a$-convex envelope $\Gamma_v^a$ on $\overline{\Omega}$ by
$$\Gamma_v^a(x) := \sup_{y \in \mathbb{R}^n,\, b \in \mathbb{R}} \{P^{-a}_{y,\,b}(x) : P^{-a}_{y,\,b} \leq v \text{ in } \overline{\Omega}\}.$$
When $a = 0$, $\Gamma^a_v$ is the usual convex envelope. Using the strict convexity of $\Omega$ it is not hard to show that $\Gamma_v^a = v$ on $\partial \Omega$.
Finally, we define
$$A_a(v) := \{x \in \Omega: v(x) = \Gamma^a_v(x)\}$$
to be those points in $\Omega$ where $v$ has a tangent paraboloid of opening $-a$ from below in $\Omega$. It is straightforward to show that
$A_a(v)$ is closed in $\Omega$, that $A_a(v) \subset A_{\tilde{a}}(v)$ when $a \leq {\tilde{a}}$, and that
\begin{equation}\label{aconvex}
\Gamma_{\mu v + \frac{\gamma}{2}|x|^2}^{\lambda} = \mu\, \Gamma_v^{\frac{\lambda + \gamma}{\mu}} + \frac{\gamma}{2}|x|^2 \quad \text{and} \quad
A_{\lambda}\left(\mu v + \frac{\gamma}{2}|x|^2\right) = A_{\frac{\lambda + \gamma}{\mu}}(v)
\end{equation}
for any $\lambda, \gamma \in \mathbb{R}$ and $\mu > 0$.

\vspace{3mm}

The key result from this section is a measure estimate (Lemma \ref{MeasureEstimate}) for super-solutions of (\ref{Main}). 
Here and below, $\Omega$ is a bounded, strictly convex domain in $\mathbb{R}^n$.
We begin with some simple observations.
\begin{lem}\label{NewContact}
Assume that $v \in C(\overline{\Omega})$ satisfies $M_{\Lambda}^-(D^2v) \leq K < \infty$. Assume that a paraboloid $P$ of opening $-a < 0$ is tangent from below to $\Gamma^0_v$ 
in $\Omega$ at a point $x_0 \in \Omega \backslash A_0(v)$, and slide $P$ up until $P + t$ touches $v$ from below at $x_1 \in \overline{\Omega}$ for some $t > 0$. 
Then $x_1 \in \overline{\Omega} \backslash A_0(v)$.
\end{lem}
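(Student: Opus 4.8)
The plan is to argue by contradiction. Suppose the new contact point $x_1$ lies in $A_0(v)$, i.e. $v(x_1) = \Gamma^0_v(x_1)$. The idea is that at such a point the convex envelope $\Gamma^0_v$ must lie above the lifted paraboloid $P+t$ at $x_1$, and since $\Gamma^0_v \le v$ everywhere with equality at $x_1$, this forces $v(x_1) = \Gamma^0_v(x_1) \ge (P+t)(x_1)$. But $P + t$ is tangent from below to $v$ at $x_1$, so $(P+t)(x_1) = v(x_1)$, hence equality holds: $\Gamma^0_v(x_1) = (P+t)(x_1)$. Now the key point: since $t > 0$ and $P$ (which is convex, having nonnegative... wait, opening $-a < 0$, so $P$ is concave) — here I should be careful. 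The paraboloid $P$ has opening $-a<0$, so $P$ is concave. The lifted function $P+t$ still has opening $-a < 0$ and is concave, and it touches $v$ from below at $x_1$. First I would rule out $x_1 \in \partial\Omega$: at $x_0 \in \Omega$ we have $P(x_0) = \Gamma^0_v(x_0) \le v(x_0)$, but actually $P$ is tangent to $\Gamma^0_v$ at $x_0 \in \Omega \setminus A_0(v)$, so $P(x_0) = \Gamma^0_v(x_0) < v(x_0)$, meaning $t$ can be taken positive; whether $x_1$ is interior or boundary is exactly what the lemma allows to vary, so the real content is $x_1 \notin A_0(v)$, and the boundary case $x_1 \in \partial\Omega$ is permitted.

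So, assuming $x_1 \in A_0(v) \subset \Omega$, I would derive a contradiction as follows. Since $x_1 \in A_0(v)$, there is a supporting hyperplane (opening $0$ paraboloid) $\ell$ to $v$ from below in $\Omega$ at $x_1$, with $\ell = \Gamma^0_v$ at $x_1$ and $\ell \le v$ on $\overline\Omega$. Comparing $\ell$ with the concave paraboloid $P + t$: both are $\le v$, both equal $v(x_1)$ at $x_1$. Consider the function $w := (P+t) - \ell$ on $\overline\Omega$; it is concave (opening $-a < 0$), vanishes at $x_1 \in \Omega$, and I claim it must be $\le 0$ near $x_1$ is false in general — rather, $w$ has a value $0$ at an interior point $x_1$, and since $w$ is concave, if $x_1$ is not the maximum of $w$ then $w$ takes positive values somewhere. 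The correct argument: near $x_1$, $P + t$ is strictly concave, so it strictly exceeds its tangent plane $\ell + \nabla(P+t)(x_1)\cdot(x - x_1) $... Let me instead use the direct comparison. At $x_0$: $P(x_0) < v(x_0)$, and $P(x_0) = \Gamma^0_v(x_0)$. For the lifted paraboloid, $(P+t)(x_1) = v(x_1) = \Gamma^0_v(x_1)$. Since $\Gamma^0_v$ is convex and $P$ is concave, the function $\Gamma^0_v - P$ is convex; it is $\ge 0$ wherever $P \le \Gamma^0_v$. We have $(\Gamma^0_v - P)(x_0) = 0$ and $(\Gamma^0_v - P)(x_1) = t > 0$. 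Now I would use that $P$ is tangent from below to $\Gamma^0_v$ at $x_0$: this means $\Gamma^0_v - P \ge 0$ on $\overline\Omega$ (since both $P \le v$ globally... no — $P \le \Gamma^0_v$ is not assumed globally). Hmm.

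The cleanest route, which I expect is the intended one: the statement that $P$ is tangent from below to $\Gamma^0_v$ at $x_0$ should mean $P \le \Gamma^0_v$ in $\overline\Omega$ with equality at $x_0$ — equivalently $P \le v$ in $\overline\Omega$ (by definition of the envelope as a sup of such paraboloids, and $P$ having opening $-a$, so $-a \ge -a$, $P$ is a candidate). Actually $\Gamma^0_v$ is the convex envelope (opening $0$), and a concave paraboloid $P$ below it is automatically below $v$. Then $P + t$ is tangent from below to $v$ at $x_1$, so $P + t \le v$ in $\overline\Omega$, hence $P + t \le \Gamma^0_v$ is false since $P+t$ is concave, not convex — but $P+t \le v$ does NOT give $P + t \le \Gamma^0_v$. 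So at $x_1$: $(P+t)(x_1) = v(x_1) \ge \Gamma^0_v(x_1)$, and always $v \ge \Gamma^0_v$, so $\Gamma^0_v(x_1) \le v(x_1) = (P+t)(x_1)$. But also $(P+t)(x_1) > P(x_1) \ge$ ... I need $P(x_1) \le \Gamma^0_v(x_1)$? Not necessarily. I think the actual argument must use the convexity of $\Gamma^0_v$: since $x_1 \in \Omega$ and $\Gamma^0_v$ is convex, any point where $\Gamma^0_v = v$ and $v$ has a concave paraboloid $P+t$ touching from below forces, along a segment through $x_1$, that $v \ge \Gamma^0_v \ge$ the linear interpolation of $\Gamma^0_v$, while $P + t \le v$ is strictly concave — this pins $x_1$ to be a point where the convex function $\Gamma^0_v$ is actually achieving equality with $v$, but $v$ being squeezed between a convex and a strictly concave function near $x_1$ — and the only way is that $v$ itself is not convex there unless the concave paraboloid degenerates, contradicting $a > 0$. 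I would make this precise by restricting to a line segment in $\Omega$ through $x_1$ and using the one-dimensional convexity/concavity mismatch. \textbf{The main obstacle} I anticipate is handling this comparison cleanly on the line through $x_1$ while correctly interpreting "tangent from below to $\Gamma^0_v$" — i.e. making sure the definitions are used so that $P \le v$ globally, after which the strict concavity of $P + t$ versus the convexity of $\Gamma^0_v$ at the interior touching point $x_1$ gives the contradiction directly.
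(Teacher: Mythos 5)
There is a genuine gap: your proposed argument never uses the hypothesis $M_{\Lambda}^-(D^2v) \leq K$, and the lemma is simply false without it, so no purely convex-analytic argument (one-dimensional "convexity/concavity mismatch" along a segment included) can close the proof. Concretely, in one dimension take $\Omega = (-1,3)$ and $v$ piecewise linear with $v(-1)=v(1)=v(3)=1$ and $v(0)=v(2)=0$. Then $\Gamma^0_v$ equals $-x$ on $[-1,0]$, $0$ on $[0,2]$, $x-2$ on $[2,3]$; the paraboloid $P(x) = -\frac{a}{2}(x-1)^2$ (with $a<1$) is tangent from below to $\Gamma^0_v$ at $x_0=1 \in \Omega\setminus A_0(v)$, and sliding it up by $t=a/2$ it first touches $v$ at $x_1=0 \in A_0(v)$ — violating the conclusion. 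Of course this $v$ has convex corners and is not a supersolution, which is exactly the point: the hypotheses you actually invoke do not suffice. Relatedly, the "squeeze" you describe at $x_1$ is not contradictory by itself: a function can be touched from below at the same interior point by an affine function and by a strictly concave paraboloid (e.g. $v=|x_1|$ at the origin is touched from below by $0$ and by $-\frac{a}{2}|x|^2$), so $v(x_1)=\Gamma^0_v(x_1)$ together with $P+t$ touching $v$ at $x_1$ forces nothing on its own.

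The correct structure (and the paper's) is a dichotomy at $x_1$, using a supporting affine function $L$ of $\Gamma^0_v$ at $x_1$. If $P+t$ is tangent from below to $L$ at $x_1$ (gradients agree), then strict concavity gives $P < P+t \leq L \leq \Gamma^0_v$ everywhere, contradicting $P(x_0)=\Gamma^0_v(x_0)$; this is the only part that is pure convex analysis, and it is roughly what your first attempt gestures at. If instead the gradients differ, then $\max\{L, P+t\}$ touches $v$ from below at $x_1$ with a transversal crossing there, i.e. $v$ admits a convex-corner-type obstacle from below; this case can only be excluded using the equation. After normalizing so that $\max\{0, 1-|x|^2\}$ is tangent from below to $v$ at $e_1$, the $C^2$ functions $\varphi_A = 1-|x|+A(|x|-1)^2$ are tangent from below near $e_1$ and satisfy $M_{\Lambda}^-(D^2\varphi_A(e_1)) = 2A-(n-1)\Lambda \to \infty$, contradicting $M_{\Lambda}^-(D^2v)\leq K$. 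This second case is precisely the one your line-segment argument cannot handle, because nothing in convex analysis forbids it (see the zig-zag example above).
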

\begin{proof}
Assume by way of contradiction that $x_1 \in A_0(v)$. Let $L$ a supporting affine function to $\Gamma^0_v$ at $x_1$.
If $P+t$ is tangent from below to $L$ at $x_1$ then we conclude that $L$ (hence $\Gamma^0_v$) is strictly larger than $P$, a contradiction.
We may thus assume after subtracting $L$, translating, rotating, rescaling, and multiplying by a constant, that $\max\{0,\, 1-|x|^2\}$ is tangent from below to $v$
at $e_1 \in \Omega$. 

It follows that for all $A > 0$ the function $\varphi_A := 1- |x| + A(|x|-1)^2$ is tangent from below to $v$ in a neighborhood 
(depending on $A$) of $e_1$. We compute $M_{\Lambda}^-(D^2\varphi_A(e_1)) = 2A - (n-1)\Lambda$. For $A$ sufficiently
large we contradict that $M_{\Lambda}^-(D^2v)$ is bounded.
\end{proof}

\begin{lem}\label{ExpandingMeasure}
Assume that $v \in C(\overline{\Omega})$ is convex. For any measurable set $F \subset \Omega$, let $V$ denote the set of vertices of all tangent paraboloids of opening 
$-a < 0$ to $v$ at points in $F$. Then $|V| \geq |F|.$
\end{lem}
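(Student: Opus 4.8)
\emph{Proof proposal.} The plan is to exhibit, for Lebesgue-a.e.\ point $x_0 \in F$, an explicit tangent paraboloid of opening $-a$ whose vertex is a well-behaved function of $x_0$, and then to show that this ``vertex map'' expands Lebesgue measure. Since $v$ is convex it is differentiable at a.e.\ $x_0 \in \Omega$; at such a point set $p := \nabla v(x_0)$, so that the affine function $L(x) := v(x_0) + p \cdot (x - x_0)$ supports $v$ from below on all of $\overline{\Omega}$. Then
$$P(x) := L(x) - \frac{a}{2}|x - x_0|^2 \leq L(x) \leq v(x) \ \text{ on } \overline{\Omega}, \qquad P(x_0) = v(x_0),$$
so $P$ is a paraboloid of opening $-a$ tangent from below to $v$ in $\Omega$ at $x_0$. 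A direct computation shows that the vertex of $P$ is $\Phi(x_0) := x_0 + \frac{1}{a}\nabla v(x_0)$. Hence, writing $F'$ for the (full-measure) subset of $F$ on which $v$ is differentiable, we obtain $\Phi(F') \subseteq V$, and it remains to prove $|\Phi(F')| \geq |F'| = |F|$.

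For the measure bound I would use monotonicity of the gradient of a convex function. For $x, x' \in F'$,
$$(\Phi(x) - \Phi(x')) \cdot (x - x') = |x - x'|^2 + \frac{1}{a}(\nabla v(x) - \nabla v(x')) \cdot (x - x') \geq |x - x'|^2,$$
since $a > 0$ and $(\nabla v(x) - \nabla v(x')) \cdot (x - x') \geq 0$. By the Cauchy--Schwarz inequality this gives $|\Phi(x) - \Phi(x')| \geq |x - x'|$. In particular $\Phi$ is injective on $F'$ and its inverse, defined on $\Phi(F')$, is $1$-Lipschitz; as $1$-Lipschitz maps do not increase Lebesgue measure, $|F'| = |\Phi^{-1}(\Phi(F'))| \leq |\Phi(F')| \leq |V|$, as desired. (Alternatively one could invoke Alexandrov's theorem together with the area formula: $D\Phi = I + \frac{1}{a}D^2 v \geq I$ a.e., so $\det D\Phi \geq 1$ a.e.\ and $|\Phi(F')| \geq \int_{F'} \det D\Phi \, dx \geq |F'|$.)

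I do not expect a genuine obstacle here: the substance is simply the observation that convexity of $v$ forces the tangent hyperplane minus the downward paraboloid to lie below $v$ \emph{globally} on $\overline{\Omega}$, which is exactly what makes the vertex map $\Phi$ well defined and expansive. The only points meriting a word of care are the a.e.-differentiability reduction---valid because a finite convex function is locally Lipschitz in the interior of $\Omega$, so $F \backslash F'$ is null---and the measurability of $V$, which is the continuous image of a Borel set and hence analytic, so the inequality holds for Lebesgue measure and not merely for outer measure.
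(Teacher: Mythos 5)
Your argument is correct, and it rests on the same identification as the paper: the vertex of the tangent paraboloid of opening $-a$ at $x$ with slope $p \in \partial v(x)$ is $x + \frac{1}{a}p$, i.e. the image of $x$ under the subgradient map of $w := \frac{1}{2}|x|^2 + \frac{1}{a}v$. The difference lies in how the measure expansion is justified. The paper keeps all subgradients, writes $V = \partial w(F)$, and quotes the Monge--Amp\`ere comparison $\det D^2 w \geq \det D^2(|x|^2/2) = 1$ in the Alexandrov sense, so that $|V| \geq |F|$ follows from the standard theory (Guti\'errez's book). You instead discard the null set where $v$ is not differentiable and prove by hand that the vertex map $\Phi = \mathrm{id} + \frac{1}{a}\nabla v$ is expansive, using monotonicity of $\nabla v$ plus Cauchy--Schwarz, so that $\Phi^{-1}$ is $1$-Lipschitz and cannot increase outer measure; this gives $|F| = |F'| \leq |\Phi(F')| \leq |V|$. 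Your route is more elementary and self-contained (no Monge--Amp\`ere measure needed), and outer measure is all the lemma requires, so the measurability discussion of $V$ is a dispensable refinement; the paper's route is shorter if one is willing to cite the theory, and it treats every vertex in $V$ at once rather than a full-measure selection. One caution: your parenthetical alternative via the area formula is the weakest link --- $\Phi$ is not Lipschitz in general, and the lower bound $|\Phi(F')| \geq \int_{F'} \det D\Phi\,dx$ requires both the injectivity you established and a nontrivial area-formula (or Monge--Amp\`ere) statement for maps that are only differentiable almost everywhere --- but your main argument does not depend on it.
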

\begin{proof}
The set $V$ consists of those points of the form $x + \frac{1}{a}p$ where $x \in F$ and $p \in \partial v(x)$. Here $\partial$ denotes sub-gradient.
We conclude that $V = \partial w(F),$ where $w := \frac{1}{2}|x|^2 + \frac{1}{a}v$.
Since $\det D^2w \geq \det D^2(|x|^2/2) = 1$ in the Alexandrov sense (see e.g. \cite{Gut} for the definition), the result follows.
\end{proof}

Finally, we prove the measure estimate.
\begin{lem}\label{MeasureEstimate}
Assume that $u \in C(\overline{\Omega})$, and $M_{\Lambda}^-(D^2u) \leq 0$ in $\Omega$. For $a > 0$ and a measurable set $F \subset \{u > \Gamma^a_u\}$, 
take the tangent paraboloids of opening $-2a$ to $\Gamma^a_u$ on $F$, and slide them
up until they touch $u$ on a set $E$. If $E \subset\subset \Omega$, then
$$|A_{2a}(u) \backslash A_a(u)| \geq 2^{-1}(2n\Lambda)^{1-n}|F|.$$
\end{lem}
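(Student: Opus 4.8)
The plan is to combine the two auxiliary lemmas just proved. The key identity is $A_{2a}(u) \setminus A_a(u)$ is precisely the set where the newly-slid paraboloids of opening $-2a$ make contact with $u$, \emph{but} the original paraboloids of opening $-a$ did not touch $\Gamma^a_u$ on the same fibers. More precisely, I would first show that the contact set $E$ is contained in $A_{2a}(u)$: once a paraboloid $P_{y,b}^{-2a}$ has been slid up to touch $u$ from below at a point $x_1 \in E$, it is a global lower barrier of opening $-2a$ for $u$ on $\overline{\Omega}$ (here one uses that the original paraboloid of opening $-2a$ lay below $\Gamma_u^a \le u$, and sliding up preserves the inequality only at the touching fiber — so one must argue that the slid paraboloid stays below $u$ on all of $\overline\Omega$; this is immediate since we slide \emph{up until first contact}). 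Since $E \subset\subset \Omega$ by hypothesis, every point of $E$ is an interior contact point, so $E \subset A_{2a}(u)$. Moreover $F \subset \{u > \Gamma_u^a\}$ is disjoint from $A_a(u)$, and the paraboloids of opening $-2a$ were taken tangent to $\Gamma_u^a$ over $F$; I would check that the resulting contact set $E$ is disjoint from $A_a(u)$ as well — this is exactly the content of Lemma \ref{NewContact} applied with $v = u$ and $a$ replaced by $2a$ (note $M_\Lambda^-(D^2u)\le 0 < \infty$, so the boundedness hypothesis holds), giving $x_1 \notin A_0(u)$; but one needs $x_1 \notin A_a(u)$, which follows by the same barrier computation with the paraboloid $-a$ absorbed, or more cleanly by applying Lemma \ref{NewContact} to $\tilde v := u - P_0$ where $P_0$ has opening $-a$, using \eqref{aconvex}. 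Hence $E \subset A_{2a}(u) \setminus A_a(u)$.

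Next comes the measure-counting step. Over each point $x\in F$ the tangent paraboloid of opening $-2a$ to $\Gamma_u^a$ has a vertex; as $x$ ranges over $F$ these vertices sweep out a set $V$ with $|V| \ge |F|$ by Lemma \ref{ExpandingMeasure} applied to the convex function $\Gamma_u^a + \tfrac{a}{2}|x|^2$ (which is convex precisely because $\Gamma_u^a$ has second derivatives $\ge -aI$ in the Alexandrov sense, and subtracting the opening restores convexity; note the vertices of opening-$(-2a)$ paraboloids tangent to $\Gamma_u^a$ correspond to vertices of opening-$(-a)$ paraboloids tangent to $\Gamma_u^a + \tfrac a2|x|^2$, i.e.\ subgradients of that convex function). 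So $|V|\ge |F|$. On the other hand, for each $x \in F$, after sliding the corresponding paraboloid up to a contact point $x_1 \in E$, the vertex of the slid paraboloid lies at distance $|x_1 - x|\cdot(\text{something})$ from... — more usefully, I would bound how far each vertex in $V$ can be from the point of $E$ it gets associated to. The uniform ellipticity enters here: at an interior contact point $x_1$ the paraboloid $P^{-2a}_{y,b}+t$ touches $u$ from below, so $D^2 u(x_1) \ge -2aI$ in the viscosity/Alexandrov sense; combined with $M_\Lambda^-(D^2 u)(x_1)\le 0$ this forces $D^2u(x_1) \le 2a(n-1)\Lambda\, I$ (the trace-type bound: if the smallest $n-1$ eigenvalues are $\ge -2a$, then $M_\Lambda^-\le 0$ forces the top eigenvalue $\le 2a(n-1)\Lambda$). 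This says $u$ is trapped between paraboloids of openings $-2a$ and $2a(n-1)\Lambda$ near $x_1$, so the map sending a vertex $v(x) \in V$ back to its contact point $x_1 \in E$ cannot contract more than a controlled amount — quantitatively, the vertex-to-contact-point correspondence is covered by the gradient map of $\tfrac12|x|^2 + \tfrac{1}{2a}u$ on $E$, whose Jacobian is at most $\big(1 + (n-1)\Lambda\big)^n \le (2n\Lambda)^{n-1}\cdot(\text{const})$...

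The cleanest route, which I expect to be the intended one: note $V = \nabla\big(\tfrac12|x|^2 + \tfrac{1}{2a}\Gamma_u^a\big)(F)$ by the computation above with $a\mapsto 2a$ after the shift, and this gradient map factors through the contact set — each vertex arises from a slid paraboloid touching $u$ at some $x_1 \in E \subset A_{2a}(u)$, and there $\nabla\big(\tfrac12|x|^2 + \tfrac1{2a}u\big)(x_1)$ equals that same vertex while $D^2\big(\tfrac12|x|^2 + \tfrac1{2a}u\big)(x_1) = I + \tfrac1{2a}D^2u(x_1) \le I + (n-1)\Lambda\, I \le n\Lambda\, I$ (using $2a(n-1)\Lambda/(2a) = (n-1)\Lambda$ and $n\Lambda \ge 1+(n-1)\Lambda$). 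Also this matrix is $\ge 0$. Therefore, by the area formula / change of variables for the gradient map of this semiconvex function on $E$,
\begin{equation*}
|F| \le |V| \le \int_E \det D^2\!\left(\tfrac12|x|^2 + \tfrac1{2a}u\right)dx \le (n\Lambda)^n\,|E|.
\end{equation*}
Hmm — this gives $(n\Lambda)^n$, not the claimed $2^{-1}(2n\Lambda)^{n-1}$; the sharper constant must come from observing that $D^2\big(\tfrac12|x|^2+\tfrac1{2a}u\big)$ has one eigenvalue $\le n\Lambda$ but the other $n-1$ eigenvalues $\le 1$ and $\ge 0$ (the eigenvalue bound $D^2u \ge -2aI$ gives $I + \tfrac1{2a}D^2u \ge 0$, and on the $n-1$ directions where $D^2u\le 0$ contributes... in fact since $M_\Lambda^-(D^2u)\le 0$, at most... ) — so $\det \le n\Lambda \cdot 1^{n-1}$? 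That is still $n\Lambda$. To recover $(2n\Lambda)^{n-1}$ one splits into eigenvalue cases more carefully: if $k$ eigenvalues of $D^2u(x_1)$ are negative they lie in $[-2a,0]$, the top one is $\le 2a(n-1)\Lambda$ but actually $\le 2a(k)\Lambda \le 2a(n-1)\Lambda$, and the product of $(1+\lambda_i/(2a))$ is then $\le (1+(n-1)\Lambda)\cdot 1^{n-1}$. The factor $2^{-1}$ and the exponent $1-n$ versus the crude count presumably come from a smarter accounting that I would need the author's argument to pin down exactly; the main obstacle is precisely this sharp constant-tracking in the Jacobian bound, while the \emph{structure} — $E\subset A_{2a}(u)\setminus A_a(u)$, then $|F| = |V| \cdot(\le 1) \le C(n,\Lambda)|E|$ via the two lemmas — is the content of the proof. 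I would therefore present the argument in that order: (1) $E \subset A_{2a}(u)\setminus A_a(u)$ via Lemmas \ref{NewContact} and \eqref{aconvex}; (2) $|F|\le |V|$ via Lemma \ref{ExpandingMeasure}; (3) the interior-point ellipticity bound $D^2u \le 2a(n-1)\Lambda I$ on $E$, giving a Jacobian bound for the vertex map and hence $|V| \le 2^{-1}(2n\Lambda)^{n-1}|E| \le 2^{-1}(2n\Lambda)^{n-1}|A_{2a}(u)\setminus A_a(u)|$.
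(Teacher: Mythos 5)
Your overall skeleton --- (i) $E\subset A_{2a}(u)\setminus A_a(u)$ via Lemma \ref{NewContact} after adding a quadratic, (ii) $|F|\le|V|$ via Lemma \ref{ExpandingMeasure}, (iii) an area-formula bound $|V|\le C(n,\Lambda)|E|$ for the vertex map on $E$ --- is exactly the paper's (the paper works with $v=\frac1a u+\frac12|x|^2$, for which $A_a(u)=A_0(v)$, $A_{2a}(u)=A_1(v)$, and the slid paraboloids have opening $-1$). But step (iii), which carries the entire quantitative content and is precisely where the paper gains a factor of $\Lambda$ over \cite{Le} (Remark \ref{Gain}), is not established, and your eigenvalue accounting is reversed. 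The correct count is: at a.e.\ $x\in E$ one has $D^2v(x)\ge -I$ and, by \eqref{Equation}, $M^-_\Lambda(D^2v(x))\le n\Lambda$, so every eigenvalue of $D_xy=I+D^2v(x)$ lies in $[0,\,2n\Lambda]$; moreover, since $M^-_\Lambda(D^2u)\le0$ forces the \emph{smallest} eigenvalue of $D^2u(x)$ to be nonpositive, exactly \emph{one} eigenvalue of $I+D^2v=2I+\frac1a D^2u$ is guaranteed to be $\le 2$, whence $\det(I+D^2v)\le 2(2n\Lambda)^{n-1}$ and $|V|\le 2(2n\Lambda)^{n-1}|E|$. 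You assert the opposite count --- that $n-1$ eigenvalues of $I+\frac1{2a}D^2u$ are $\le1$ and only one is large, so ``$\det\le n\Lambda$'' --- which is false: $D^2u=\mathrm{diag}\bigl(-a,\tfrac{a\Lambda}{n-1},\dots,\tfrac{a\Lambda}{n-1}\bigr)$ satisfies $D^2u\ge-2aI$ and $M^-_\Lambda(D^2u)=0$, yet $I+\frac1{2a}D^2u$ has $n-1$ eigenvalues equal to $1+\tfrac{\Lambda}{2(n-1)}>1$ and determinant of order $\Lambda^{n-1}$. Your concluding claim $|V|\le 2^{-1}(2n\Lambda)^{n-1}|E|$ is stated without proof, and you explicitly defer the constant to ``the author's argument,'' so the lemma as stated is not proved by the proposal.

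Two further gaps. First, the $V$ you feed into Lemma \ref{ExpandingMeasure} is not the $V$ you use on the $E$-side: applying that lemma to the convex function $\Gamma^a_u+\frac a2|x|^2$ with opening $-a$ controls the vertices of the \emph{shifted} paraboloids, which sit at $2x+q/a$, i.e.\ at twice the vertices $x+q/(2a)$ of the original opening $-2a$ paraboloids whose vertex map $\nabla\bigl(\frac12|x|^2+\frac1{2a}u\bigr)$ you use at the contact points; the two vertex sets differ by a dilation, so the chain $|F|\le|V|\le C|E|$ as written mixes sets whose measures differ by a factor $2^n$ (the fix is to work in one normalization throughout, as the paper does with $v$). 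Second, the pointwise inequalities $D^2u(x_1)\ge-2aI$ and $M^-_\Lambda(D^2u(x_1))\le0$, the a.e.\ second differentiability, and the Lipschitz regularity of the vertex map on $E$ needed for the area formula do not hold a priori for a merely continuous viscosity supersolution. The paper first proves the estimate for semi-concave $u$ (so Alexandrov's theorem applies and $x\mapsto x+\nabla v(x)$ is Lipschitz on $E$), and then removes this assumption by inf-convolution, verifying that the approximate contact sets $E_k$ stay compactly contained and that $\limsup_k E_k\subset E$; your proposal omits this regularization entirely.
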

\begin{proof}
By inner approximation with closed sets, it suffices to prove the Lemma when $F$ is closed.
Let $v = \frac{1}{a}u + \frac{1}{2}|x|^2$. Then using (\ref{aconvex}) we have that $A_a(u) = A_0(v), \, A_{2a}(u) = A_1(v),$ and that
the set $E$ is obtained by taking the paraboloids of opening $-1$ which are tangent from below to $\Gamma^0_v$ on $F$, and sliding them up until they touch $v$.
Let $V$ denote the set of vertices of these paraboloids. We remark that $V$ and $E$ are closed (in particular, measurable).

\vspace{3mm}

We first claim that $E \subset A_{2a}(u) \backslash A_{a}(u)$. Indeed, we have
\begin{equation}\label{Equation}
M_{\Lambda}^-(D^2v) \leq \frac{1}{a}M_{\Lambda}^-(D^2u) + M_{\Lambda}^+(I) \leq n\Lambda
\end{equation}
in the viscosity sense. By Lemma \ref{NewContact} we have $E \cap A_0(v) = \emptyset$, and by hypothesis $E \cap \partial \Omega = \emptyset$. Since
$E \subset A_1(v)$ by definition, we conclude that $E \subset A_1(v) \backslash A_0(v) = A_{2a}(u) \backslash A_{a}(u)$.

\vspace{3mm}

Assume for the moment that $u$ is semi-concave (that is, for some $M > 0$, $u$ admits a tangent paraboloid of opening $M$ from above in $\Omega$ at every point). 
By Alexandrov's theorem, $v$ is almost everywhere twice differentiable (see e.g. \cite{EG}), and at a point $x_0 \in \Omega$ of twice differentiability we have by (\ref{Equation}) and a standard argument that
\begin{equation}\label{PointwiseEquation}
M_{\Lambda}^-(D^2v(x_0)) \leq n\Lambda.
\end{equation}
Let $x \in E$ with corresponding vertex $y \in V$. By semi-concavity, $v$ is differentiable at $x$ and the vertex $y$ is given by
$$y = x + \nabla v(x).$$
Furthermore, the map $x \rightarrow y$ is Lipschitz on $E$. If in addition $u$ is twice differentiable at $x$, then $D^2v(x) \geq -I$ (since $x \in A_1(v)$). 
Combining this with (\ref{PointwiseEquation}) we obtain $D^2v(x) \leq (2n-1)\Lambda I$. Note also that the smallest eigenvalue of $D^2u(x)$ is nonpositive,
since $M_{\Lambda}^-(D^2u(x)) \leq 0$. It follows that the smallest eigenvalue of $D^2v(x)$ is at most $1$. We conclude that
$$\det D_x y = \det (I + D^2v) \leq 2(2n\Lambda)^{n-1}$$
almost everywhere on $E$. Integrating over $E$ and using the area formula we obtain
$|V| \leq 2(2n\Lambda)^{n-1}|E|.$
Finally, by Lemma \ref{ExpandingMeasure} we have $|F| \leq |V|$. Using that $E \subset A_{2a}(u) \backslash A_{a}(u)$, we obtain the desired estimate when $u$ is semi-concave.

\vspace{3mm}

It is standard that we can reduce to the case that $u$ is semi-concave by using inf-convolutions; see e.g. the proof of Lemma $2.1$ in \cite{S}. 
For the reader's convenience we sketch the argument. Using inf-convolution we can find for any $\Omega'$ with $E \subset\subset \Omega' \subset\subset \Omega$ 
a sequence of semi-concave super-solutions $u_k$ of (\ref{Main}) in $\Omega'$ that converge uniformly to $u$. Let $v_k := \frac{1}{a}u_k + \frac{1}{2}|x|^2$, and slide the paraboloids of opening $-1$ with vertices in $V$ from below $v_k$ until they touch
on a set $E_k$. For $k$ large we have $E_k \subset \subset \Omega'$, and the above argument gives $|E_k| \geq 2^{-1}(2n\Lambda)^{1-n}|V|$. It is straightforward
to show that 
$$\lim\sup_{k \rightarrow \infty} E_k = \cap_{k \geq 1} \cup_{m \geq k} E_m \subset E,$$
and the conclusion follows as above.
\end{proof}

%%%%%%%%%%%%%%%%%%%%%%%%%%%%%%%%%%%%%%%%%%%%%%%%%%%%%%%%%%%%%%%%%%%%%%
\section{Interior $W^{2,\,\epsilon}$ Estimate}\label{InteriorW2E}

In this section we prove the interior $W^{2,\,\epsilon}$ estimate Theorem \ref{LocalW2E}. We reduce to the case that $u$ agrees with a paraboloid near the boundary,
which makes the argument simple.

\begin{proof}[{\bf Proof of Theorem \ref{LocalW2E}:}]
After dividing by $2^6\|u\|_{L^{\infty}(B_1)}$ and adding a constant we may assume that $0 < u \leq 2^{-4}$ in $\overline{B_1}$. Define the extension
$$\tilde{u} = \begin{cases}
\min\left\{u,\, \frac{1}{4}(1-|x|^2)\right\}, \quad x \in B_1 \\
\frac{1}{4}(1-|x|^2), \quad x \in B_R \backslash B_1
\end{cases}$$
for $R$ large (say $32$). It is straightforward to check that $\tilde{u} \in C(\overline{B_R})$ with $\tilde{u} = u$ in $B_{3/4}$, and that
$B_R \backslash A_1(\tilde{u}) \subset \subset B_2$. Furthermore, by basic properties of super-solutions we have that $M^{-}_{\Lambda}(D^2\tilde{u}) \leq 0$ in $B_R$.

We will prove by induction that
$$|B_2 \backslash A_{2^k}(\tilde{u})| \leq (1-2^{-1}\,(2n\Lambda)^{1-n})^k|B_2|$$
for all $k \geq 0$. The case $k = 0$ is obvious, so we proceed to the inductive step. 
Let $F_k = B_R \backslash A_{2^k}(\tilde{u}) = B_2 \backslash A_{2^k}(\tilde{u})$.
Take the paraboloids of opening $-2^{k+1}$ which are tangent from below to $\Gamma^{2^k}_{\tilde{u}}$ on $F_k$, and slide them up until they
touch $\tilde{u}$ on a set $E_{k}$. Since $R$ is large and $F_k \subset B_2$ it is easy to see from the
definition of $\tilde{u}$ that $E_{k} \subset\subset B_R$. 
We conclude using Lemma \ref{MeasureEstimate} that
\begin{equation}\label{InteriorMeasureBump}
2^{-1}(2n\Lambda)^{1-n}|B_2 \backslash A_{2^k}(\tilde{u})| = 2^{-1}(2n\Lambda)^{1-n}|F_k| \leq |A_{2^{k+1}}(\tilde{u}) \backslash A_{2^k}(\tilde{u})|.
\end{equation}
Since $|B_2 \backslash A_{2^{k+1}}(\tilde{u})| = |B_2 \backslash A_{2^k}(\tilde{u})| - |A_{2^{k+1}}(\tilde{u}) \backslash A_{2^k}(\tilde{u})|$ it follows from (\ref{InteriorMeasureBump}) that
$$|B_2 \backslash A_{2^{k+1}}(\tilde{u})| \leq (1-2^{-1}\,(2n\Lambda)^{1-n})|B_2 \backslash A_{2^k}(\tilde{u})|,$$
which completes the inductive step. 

Since $\tilde{u} \leq u$ and they agree in $B_{3/4}$, we have for $k \geq 0$ that
$$\{\underline{\Theta}_u > 2^k\} \cap B_{1/2} \subset B_2 \backslash A_{2^k}(\tilde{u}).$$ 
We conclude that
$$|\{\underline{\Theta}_u > 2^k\} \cap B_{1/2}| \leq (1-2^{-1}(2n\Lambda)^{1-n})^k|B_2|,$$
and Theorem \ref{LocalW2E} follows quickly. More precisely, for $2^k \leq t < 2^{k+1}$ we get the desired inequality provided
$$\epsilon \leq -\frac{k}{(k+1)\log(2)}\log(1-2^{-1}(2n\Lambda)^{1-n})$$
which is guaranteed for all $k \geq 1$ when $\epsilon = 2^{-2}(2n\Lambda)^{1-n}$ by elementary calculus.
\end{proof}

\begin{rem}\label{Gain}
There are two places where we gain a factor of $\Lambda$ with respect to the main result of \cite{Le}. First, in our basic measure estimate Lemma \ref{MeasureEstimate}
we use that at least one eigenvalue of $D^2u$ is nonpositive; and second, our method avoids the use of localizing barriers, which increase the paraboloid opening by a factor $\sim 2^{\Lambda}$
rather than $2$ at each stage of the iteration in \cite{Le}.
\end{rem}

%%%%%%%%%%%%%%%%%%%%%%%%%%%%%%%%%%%%%%%%%%%%%%%%%%%%%%%%%%%%%%%%%%%%%%%%%%%%
\section{Global $W^{2,\,\epsilon}$ Estimate}\label{GlobalW2E}

In this section we prove the global $W^{2,\,\epsilon}$ estimate Theorem \ref{GlobW2E}. To apply Lemma \ref{MeasureEstimate} we slide paraboloids that are tangent in balls $B_{1-\rho_k}$ that expand quickly to fill $B_1$,
and use a dichotomy argument.

\begin{proof}[{\bf Proof of Theorem \ref{GlobW2E}:}]
After dividing by $2^{10n}\|u\|_{L^{\infty}(B_1)}$ and adding a constant we may assume that $0 \leq u \leq 2^{-8n}$. We will show by induction that
\begin{equation}\label{Desired}
|B_1 \backslash A_{2^k}(u)| \leq (1-2^{-2}(2n\Lambda)^{1-n})^k|B_1|.
\end{equation}
The case $k = 0$ is obvious, so proceed to the inductive step.

Let $\rho_k = 2^{-4n-k/2}$. We claim that if a paraboloid $P$ of opening $-2^{k+1}$ is tangent from below to $\Gamma_u^{2^k}$ in $B_{1-4\rho_k}$, then when we slide it up until it touches $u$ the new
contact point is in $B_{1-\rho_k}$. Indeed, since $0 \leq \Gamma^{2^k}_u \leq u \leq 2^{-8n} = 2^k\rho_k^2$, it is easy to see that the vertex of $P$ is in $B_{1-3\rho_k}$. For the same reason, when we
slide $P$ up, the new contact point is a distance at most $\rho_k$ from the vertex.

It follows that if we define $F_k = B_{1-4\rho_k} \backslash A_{2^k}(u),$
and define $E_{k}$ as in the proof of Theorem \ref{LocalW2E}, then $E_{k} \subset B_{1-\rho_k} \subset\subset B_1$. Applying Lemma \ref{MeasureEstimate} we conclude that
\begin{equation}\label{MeasureBump}
2^{-1}(2n\Lambda)^{1-n}|F_k| \leq |A_{2^{k+1}}(u) \backslash A_{2^k}(u)|.
\end{equation}
There are two cases to consider:

\vspace{3mm}

{\bf Case 1:} $|F_k| \geq \frac{1}{2}|B_1 \backslash A_{2^k}(u)|$. Then (\ref{Desired}) follows from inequality (\ref{MeasureBump}) by arguing exactly as in the proof of Theorem \ref{LocalW2E}.

\vspace{3mm}

{\bf Case 2:} $|F_k| < \frac{1}{2}|B_1 \backslash A_{2^k}(u)|$. Then 
\begin{align*}
|B_1 \backslash A_{2^{k+1}}(u)| \leq |B_1 \backslash A_{2^k}(u)| &\leq 2 |B_1 \backslash B_{1-4\rho_k}| \\
&= 2|B_1|(1-(1-4\rho_k)^n) \\
&\leq 8n\rho_k|B_1|\\
&< 2^{-\frac{k+1}{2}}|B_1|,
\end{align*}
using the definition of $\rho_k$. By elementary calculus we have
$$\log(1-2^{-2}(2n\Lambda)^{1-n}) \geq -2^{-1}(2n\Lambda)^{1-n} \geq -2^{-1}\log(2),$$
completing the induction. The desired inequality for $t \geq 2$ with $\epsilon = 2^{-3}(2n\Lambda)^{1-n}$ follows in the same way as described in the proof of Theorem \ref{LocalW2E}.
\end{proof}

%%%%%%%%%%%%%%%%%%%%%%%%%%%%%%%%%%%%%%%%%%%%%%%%%%%%%%%%%%%%%%%%%%%%%%%%%%%%%%%%%%%%%%%%%%%%%%%%%%%%%%%%%%%%
\section{Weak Harnack Inequality}\label{WeakHarnack}

In this section we prove Theorem \ref{LEps}. Our strategy is the same as for the global $W^{2,\,\epsilon}$ estimate.
The new difficulty is that we do not have $L^{\infty}$ control of $u$, so we require one extra ingredient.
This is provided by the following estimate, which we view as a bound on the size of $u$ at scale $\rho$:
\begin{lem}\label{Growth}
Assume that $M_{\Lambda}^-(D^2u) \leq 0$ in $B_4 \subset \mathbb{R}^n$, and $u \geq 0$. Then 
$$\inf_{B_{\rho}(x_0)}u \leq 2\, \rho^{-n\Lambda}u(0)$$
for all $x_0 \in B_{1}$ and $\rho \leq 1$.
\end{lem}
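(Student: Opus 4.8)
The plan is to prove Lemma \ref{Growth} by iterating a Harnack-type growth estimate across dyadic scales, using a barrier (paraboloid) comparison at each scale. The statement essentially says that the infimum of $u$ on a ball of radius $\rho$ can be bounded by a power $\rho^{-n\Lambda}$ of $u(0)$, which is a quantitative unique-continuation-type growth bound. Since $\rho \le 1$ the exponent $-n\Lambda$ makes the bound weak (large) as $\rho \to 0$, so we only need a comparatively crude estimate.

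First I would establish the single-scale version: if $M_\Lambda^-(D^2u) \le 0$ and $u \ge 0$ in a ball $B_{2r}(z)$, then $\inf_{B_r(z)} u \ge c(n,\Lambda)\, u(z)$ for some explicit constant $c(n,\Lambda) > 0$ — equivalently, moving to a nearby center costs at most a fixed factor, and halving the radius costs at most a fixed factor. The natural tool is to compare $u$ from below with a barrier of the form $P(x) = u(z) - A|x - z|^2 - B|x-z|$ (or a radial function $\phi(|x-z|)$) adjusted so that $\phi \le 0$ on $\partial B_{2r}(z)$ and $M_\Lambda^-(D^2\phi) > 0$ where $u$ could potentially touch it, forcing $u \ge \phi$ by the maximum principle for supersolutions, hence $u \ge $ (positive constant)$\cdot u(z)$ on $B_r(z)$. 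Tracking the $\Lambda$-dependence of the constant in the barrier computation $M_\Lambda^-(D^2\phi)$ carefully is what produces the exponent $n\Lambda$ in the final bound; concretely, a radial barrier with $M_\Lambda^- \ge 0$ requires the opening ratio to scale like $\Lambda$ in the exponent, which after iteration over $\sim \log(1/\rho)$ scales yields $\rho^{-n\Lambda}$.

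Second, I would iterate: given $x_0 \in B_1$ and $\rho \le 1$, connect $0$ to $x_0$ and then contract to radius $\rho$ through a chain of $O(\log(1/\rho))$ balls of geometrically decreasing radii, all contained in $B_4$ (this is where $B_4$ rather than $B_1$ is used — there is room to move the centers and still stay inside the domain where the equation holds). Applying the single-scale estimate along the chain gives $\inf_{B_\rho(x_0)} u \ge c(n,\Lambda)^{N} u(0)$ with $N \sim \log_2(1/\rho) + O(1)$, i.e. $\inf_{B_\rho(x_0)} u \ge \rho^{\,\beta(n,\Lambda)} \cdot (\text{const}) \cdot u(0)$; choosing the barrier constants so that $\beta \le n\Lambda$ and the $O(1)$ factors are absorbed into the prefactor $2$ yields the claimed bound $\inf_{B_\rho(x_0)} u \le 2\rho^{-n\Lambda} u(0)$ (note the direction of the inequality in the statement is the trivial direction: the infimum is \emph{small}, bounded above, so in fact one only needs a \emph{lower} bound on $u(0)$ in terms of $\inf u$, which is exactly what the barrier chain gives).

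The main obstacle I anticipate is bookkeeping the $\Lambda$-dependence through the barrier computation so as to land exactly on the exponent $n\Lambda$ rather than something larger like $C(n)\Lambda$ or $n\Lambda + C$. The key point is that a barrier $\phi(x) = |x|^{-\gamma} - \text{const}$ (or $\log|x|$ type in borderline cases) satisfies $M_\Lambda^-(D^2\phi) \ge 0$ precisely when $\gamma \ge n\Lambda - 1$ or so, since $D^2(|x|^{-\gamma})$ has one large positive radial eigenvalue and $n-1$ negative tangential eigenvalues, and $M_\Lambda^-$ weights the negative ones by $\Lambda$; balancing gives the threshold linear in $n\Lambda$. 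One then has to check that the same $\gamma$ works simultaneously for the center-shifting and radius-halving steps, and that the constant $2$ (as opposed to a worse dimensional constant) suffices after summing the geometric chain — this is the kind of "elementary calculus" optimization the author performs elsewhere in the paper, so I expect it to go through cleanly once the barrier exponent is pinned down.
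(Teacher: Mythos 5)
Your proposal has a genuine gap at its foundation: the single-scale estimate you propose, $\inf_{B_r(z)}u \ge c(n,\Lambda)\,u(z)$ for nonnegative supersolutions, is false. A supersolution's value at a point cannot be bounded above by its infimum on a surrounding ball: take $u = \min\{|x-z|^{2-n},\,M\}$ for $n\ge3$ (or the truncated logarithm in the plane), which is harmonic away from $z$, hence satisfies $M^-_\Lambda(D^2u)\le \Delta u = 0$ there and is a supersolution everywhere after truncation; then $u(z)=M$ is arbitrarily large while $\inf_{B_r(z)}u = r^{2-n}$ stays fixed. For the same reason the chained conclusion $\inf_{B_\rho(x_0)}u \ge \rho^{\beta}\cdot\mathrm{const}\cdot u(0)$ is false as well, and in any case it points the wrong way: the lemma is equivalent to $u(0) \ge \tfrac12 \rho^{n\Lambda}\inf_{B_\rho(x_0)}u$, so you must propagate the lower bound $m := \inf_{B_\rho(x_0)}u$ \emph{outward} from the small ball to the point $0$, not propagate the pointwise value $u(0)$ into the ball. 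Your closing parenthetical correctly identifies the needed direction, but the chain as you set it up delivers the opposite inequality, and its building block is not available for supersolutions (only the spreading-of-positivity direction is).

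The argument can be repaired along your lines by replacing the single-scale step with the true growth lemma (if $u\ge m$ on $B_r(z)$ and $u\ge 0$ on $B_{4r}(z)\subset B_4$, then $u\ge c(n,\Lambda)\,m$ on $B_{2r}(z)$, with $c\sim 2^{-n\Lambda}$, proved by exactly the radial barrier comparison you describe) and iterating from radius $\rho$ up to unit scale; after $\sim\log_2(1/\rho)$ doublings this yields $u(0)\ge \mathrm{const}\cdot\rho^{n\Lambda} m$. The paper, however, needs no iteration at all: it compares $u$ with $u(0)\,w$ in the single annulus $B_4\setminus B_\rho(x_0)$, where $w = \left(|x-x_0|^{-n\Lambda}-2^{-n\Lambda}\right)/\left(1-2^{-n\Lambda}\right)$. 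Your eigenvalue computation is the right ingredient here: $M^-_\Lambda(D^2w)>0$ away from $x_0$ because $n\Lambda + 1 > (n-1)\Lambda$ (the threshold is $\gamma > (n-1)\Lambda-1$, not $n\Lambda-1$); moreover $w<0$ on $\partial B_4$ (this is why the equation is assumed in $B_4$) and $w(0)>1$ since $|x_0|<1$. If $u$ exceeded $2\rho^{-n\Lambda}u(0)\ \bigl(\ge u(0)\,w\bigr)$ on all of $\partial B_\rho(x_0)$, the maximum principle in the annulus would force $u(0)\ge u(0)\,w(0)>u(0)$, a contradiction, which gives the stated bound with the explicit constant $2$ coming from $1-2^{-n\Lambda}\ge\tfrac12$.
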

\begin{proof}
If $0 \in B_{\rho}(x_0)$ the conclusion is obvious, so assume not. For $x_0 \in B_1$ let $w := (|x-x_0|^{-n\Lambda}-2^{-n\Lambda})/(1-2^{-n\Lambda}).$
It is easy to compute $M_{\Lambda}^-(D^2w) > 0$ away from $x_0$, that $w < 0$ on $\partial B_4$, and that $w(0) > 1$. If $u > 2\,\rho^{-n\Lambda}u(0)\, (> u(0)w)$ on 
$\partial B_{\rho}(x_0)$, we contradict the maximum principle.
\end{proof}

\begin{proof}[{\bf Proof of Theorem \ref{LEps}}:]
After dividing by $2^{64n^2\Lambda}u(0)$ we may assume that $u(0) = 2^{-64n^2\Lambda}$. 
We first claim that $A_{2^k}(u) \cap B_{1/2} \subset \{u \leq 2^k\}$ for all $k \geq 0$. Indeed, if there exists $x_0 \in A_{2^k}(u) \cap B_{1/2}$ with $u(x_0) > 2^k$, then
$u \geq 2^{k-1}$ in a ball of radius $1$ containing $x_0$, hence in some ball $B_{1/4}(x_1)$ with $x_1 \in B_1$. However, by Lemma \ref{Growth} we have 
$\inf_{B_{1/4}(x_1)}u \leq 2^{1+2n\Lambda - 64n^2\Lambda} < 2^{-1}$, a contradiction.

Now, let $\rho_k := 2^{-30n}\,2^{-\frac{k+1}{2n\Lambda}}$ for $k \geq 0$. We claim that 
\begin{equation}\label{IteratedGrowth}
\inf_{B_{\rho_k}(x)}(2^{-k}u) \leq \rho_k^2 \text{  for all  } x \in B_1.
\end{equation}
This is a direct consequence of Lemma \ref{Growth}, which gives
$$\rho_k^{-2} \inf_{B_{\rho_k}(x)}(2^{-k}u) \leq 2^{2 + 60n - 34n^2\Lambda}\,2^{-(k+1)(\frac{1}{2} - \frac{1}{n\Lambda})} \leq 2^{2-8n} < 1.$$

We will show by induction that
\begin{equation}\label{DesiredWH}
|B_{1/2} \backslash A_{2^k}(u)| \leq (1-2^{-2}(2n\Lambda)^{1-n})^k |B_{1/2}|.
\end{equation}
The case $k = 0$ is obvious, so we proceed to the inductive step.

Assume that a paraboloid $P$ of opening $-2$ touches $\Gamma_{2^{-k} u}^1$ from below at $x_0 \in B_{1/2 - 5\rho_k}$, and 
slide it up until it touches $2^{-k}u$. We claim that the contact point is in $B_{1/2 - \rho_k}$. Indeed, note that $0 \leq \Gamma_{2^{-k}u}^1 \leq 2^{-k}u$. 
From (\ref{IteratedGrowth}) it is easy to see that the vertex $x_1$ of $P$ is in $B_1$.
If $|x_1 - x_0| \geq 2\rho_k$ then $2^{-k}u \geq 3\rho_k^2$ in $B_{\rho_k}(x_1)$, contradicting (\ref{IteratedGrowth}), so $x_1 \in B_{1/2 - 3\rho_k}$. Similar reasoning shows that
when we lift $P$ the new contact point must be a distance at most $2\rho_k$ from $x_1$, proving the claim.

It follows that for $F_k := B_{1/2 - 5\rho_k} \backslash A_{2^k}(u)$ and $E_{k}$ as in the proof of Theorem \ref{GlobW2E}, we have $E_{k} \subset B_{1/2 - \rho_k} \subset \subset B_{1/2}$.
We conclude using Lemma \ref{MeasureEstimate} that
\begin{equation}\label{MeasureBumpWH}
|(A_{2^{k+1}}(u) \backslash A_{2^k}(u)) \cap B_{1/2}| \geq 2^{-1}(2n\Lambda)^{1-n}|F_k|.
\end{equation}
(Actually, we use a small modification of Lemma \ref{MeasureEstimate} where we assume that $E \subset\subset \Omega' \subset \Omega$ and conclude that 
$|(A_{2a}(u) \backslash A_a(u)) \cap \Omega'| \geq 2^{-1}(2n\Lambda)^{1-n}|F|,$ which is easy to see from the proof). There are two cases to consider.

\vspace{3mm}

{\bf Case $1$:} $|F_k| \geq \frac{1}{2} |B_{1/2} \backslash A_{2^k}(u)|$. Then (\ref{DesiredWH}) follows from (\ref{MeasureBumpWH}) by arguing exactly as in the proof of Theorem \ref{LocalW2E}
and applying the inductive hypothesis.

\vspace{3mm}

{\bf Case $2$:} $|F_k| < \frac{1}{2} |B_{1/2} \backslash A_{2^k}(u)|$. Then we have
\begin{align*}
|B_{1/2} \backslash A_{2^{k+1}}(u)| \leq |B_{1/2} \backslash A_{2^k}(u)| &\leq 2|B_{1/2} \backslash B_{1/2-5\rho_k}| \\
&\leq 20n\rho_k |B_{1/2}| \\
&< 2^{-\frac{k+1}{2n\Lambda}}|B_{1/2}|.
\end{align*}
Elementary calculus gives $2^{-\frac{1}{2n\Lambda}} < 1-2^{-2}(2n\Lambda)^{1-n}$, completing the induction.

The inclusion $\{u > 2^k\} \cap B_{1/2} \subset B_{1/2} \backslash A_{2^k}(u)$ gives
$$|\{u > 2^k\} \cap B_{1/2}| \leq (1-2^{-2}(2n\Lambda)^{1-n})^k|B_{1/2}|,$$
and the desired inequality for $t \geq 2$ with $\epsilon = 2^{-3}(2n\Lambda)^{1-n}$ follows.
\end{proof}

%REMARK: We can improve to a global $L^{\epsilon}$ estimate using a slightly more involved computation of the rate of growth of $u$ near the boundary at scale $\rho$.

%REMARK: In an interesting recent paper \cite{DS}, De Silva and Savin give an approach to the weak Harnack inequality that avoids using the area formula. 
%An important step is to show that the map from vertices to contact points cannot contract distances by factor smaller than $\sim \Lambda^{-1}$.
%It seems that by combining their techniques with ours (which avoid localization) one obtains $\epsilon \sim \Lambda^{-n}$.

%%%%%%%%%%%%%%%%%%%%%%%%%%%%%%%%%%%%%%%%%%%%%%%%%%%%%%%%%%%%%%%%%%%%%%%%%%%%%%%%%%%

%%%%%%%%%%%%%%%%%%%%%%%%%%%%%%%%%%%%%%%%%%%%%%%%%%%%%%%%%%%%%%%%%%%%%%%%%%%%%%%%%%%%

\end{document}